\documentclass[10pt,a4paper]{article}
\usepackage{amsmath,amsfonts,amsthm,comment,url}
\usepackage{graphicx}
\usepackage{hyperref}

\theoremstyle{plain}
\newtheorem{theorem}{Theorem}

\theoremstyle{definition}

\newcommand{\EE}{\mathbb{E}}
\newcommand{\Bin}{\operatorname{Bin}}

\begin{document}

\title{The law of thin processes: \\a law of large numbers for point processes}
\author{Matthew Aldridge}
\date{February 2025}

\maketitle

\begin{abstract}
If you take a superposition of $n$ IID copies of a point process and thin that by a factor of $1/n$, then the resulting process tends to a Poisson point process as $n\to\infty$. We give a simple proof of this result that highlights its similarity to the law of large numbers and to the law of thin numbers of Harremoës et~al.
\end{abstract}

\section{Theorems}

The law of large numbers is the following result.

\begin{theorem}[Law of large numbers] \label{largenum}
Let $X$ be a real-valued random variable with finite expectation $\mu = \EE X$. Let $X_1, X_2, \dots$ be IID copies of $X$. Then the scaled sum
\[ \frac{1}{n}(X_1 + X_2 + \cdots + X_n) \]
tends in distribution to a point mass at $\mu$ as $n \to \infty$.
\end{theorem}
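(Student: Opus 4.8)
The plan is to argue through characteristic functions and L\'evy's continuity theorem. Besides being short, this route has the same skeleton as the probability-generating-functional proof we will give later for point processes, with the exponential $e^{i\mu t}$ appearing below in the role that the generating functional of a Poisson process will play there.

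Let $\phi(t) = \EE\, e^{itX}$ denote the characteristic function of $X$. Since $\mu = \EE X$ is finite we have $\EE|X| < \infty$, so $\phi$ is differentiable at the origin with $\phi(0) = 1$ and $\phi'(0) = i\mu$; equivalently, $\phi(t) = 1 + i\mu t + o(|t|)$ as $t \to 0$. Writing $S_n = \tfrac1n(X_1 + \cdots + X_n)$ and using that the $X_k$ are independent and identically distributed, the characteristic function of $S_n$ factorises:
\[ \EE\, e^{itS_n} = \prod_{k=1}^{n} \EE\, e^{i(t/n)X_k} = \phi\!\left(\frac{t}{n}\right)^{\!n}. \]

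Now fix $t \in \mathbb{R}$. As $n \to \infty$ the argument $t/n \to 0$, so the expansion of $\phi$ gives $\phi(t/n) = 1 + i\mu t/n + o(1/n)$, and I would then verify that $\phi(t/n)^n \to e^{i\mu t}$. Since $e^{i\mu t}$ is the characteristic function of the point mass $\delta_\mu$ at $\mu$, and this limit function is continuous at $0$, L\'evy's continuity theorem yields $S_n \to \delta_\mu$ in distribution, which is the assertion. (As the limit is deterministic, this is the same as convergence in probability.)

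The one step that is more than bookkeeping is the limit $\phi(t/n)^n \to e^{i\mu t}$: naively taking a complex logarithm mishandles the $o(1/n)$ error, so I would instead use the elementary inequality $|z^n - w^n| \le n\,|z - w|$, valid for $|z|, |w| \le 1$, applied with $z = \phi(t/n)$ and $w = 1 + i\mu t/n$ — the modulus of $w$ exceeds $1$ only by $O(1/n^2)$, contributing a harmless bounded factor — together with the real-analysis fact that $(1 + i\mu t/n)^n \to e^{i\mu t}$. I expect this to be essentially the only technical obstacle. A reader who prefers to avoid characteristic functions altogether can instead truncate, replacing each $X_k$ by $X_k \mathbf{1}_{\{|X_k| \le n\}}$, bounding the contribution of the discarded part and applying Chebyshev's inequality to the truncated sum; but the characteristic-function argument is cleaner and foreshadows what follows.
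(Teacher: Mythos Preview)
Your proof is correct and follows essentially the same route as the paper's: factorise the transform of the scaled sum via independence and the scaling property, insert the first-order expansion at the origin, and pass to the exponential limit via $(1+c/n+o(1/n))^n\to e^{c}$, then invoke the continuity theorem. The only difference is cosmetic --- the paper uses the Laplace transform $L_X(u)=\EE\,e^{-uX}$ (assuming it exists near $0$) while you use the characteristic function, a substitution the paper itself flags as the rigorous alternative --- and your handling of the $n$th-power limit is in fact more careful than the paper's sketch.
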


(The weak law of large numbers is more often presented as concerning convergence in probability to $\mu$, but convergence in probability to a constant and convergence in distribution to a point mass at that constant are equivalent.)

The steps here are: first, we sum $n$ IID copies of a random variable; second, we scale in by $1/n$; then we converge to a distribution that depends only on the first moment $\mu$ of the random variable we started with.

Harremoës, Johnson and Kontoyiannis \cite{HJK} give a similar result for discrete random variables that take values in the non-negative integers, where the limit theorem remains a statement about the non-negative integers. While the law of large numbers would still apply for such a random variable, the act of scaling the sum by $1/n$ takes us outside the non-negative integers, and the limiting distribution of a point mass at $\mu$ is also no longer supported on the non-negative integers (unless $\mu$ itself  happens to be an integer). Harremoës et~al make two changes to the law of large numbers to remedy this. The first change is that the scaling operation is replaced with with \emph{thinning}. To get the thinning $p \circ X$ of a non-negative integer random variable $X$, one thinks of $X$ as representing a number of items, each of which is independently kept with probability $p$ and removed with probability $1 - p$; more formally, the conditional distribution of ${p \circ X}$ given $X$ is the binomial distribution $\Bin(X, p)$. The second change is that the limiting distribution of the point mass at $\mu$ is replaced by a Poisson distribution with rate $\mu$. Harremoës et~al call this result `the law of thin numbers'.

\begin{theorem}[Law of thin numbers \cite{HJK}] \label{thinnum}
Let $X$ be a random variable on the non-negative integers with finite expectation $\mu = \EE X$. Let $X_1, X_2, \dots$ be IID copies of $X$. Then the thinned sum
\[ \frac{1}{n}\circ (X_1 + X_2 + \cdots + X_n) \]
tends in distribution to a Poisson distribution with rate $\mu$ as $n \to \infty$.
\end{theorem}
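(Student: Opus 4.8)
The plan is to mirror the characteristic-function proof of the law of large numbers, with probability generating functions (PGFs) playing the role of characteristic functions, the point $z=1$ playing the role of $t=0$, and thinning playing the role of scaling. Write $G(z) = \EE z^X$ for the PGF of $X$, defined at least on $[0,1]$. I will use two standard facts. First, the superposition rule: since $X_1, \dots, X_n$ are independent, the sum $S_n = X_1 + \cdots + X_n$ has PGF $G(z)^n$. Second, the thinning rule: the conditional law $\Bin(X,p)$ has PGF $(1-p+pz)^X$ given $X$, so $p \circ X$ has PGF $G(1-p+pz)$ --- that is, thinning by $p$ amounts to substituting $1-p+pz$ for $z$. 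Combining these, $\tfrac1n \circ S_n$ has PGF
\[ G_n(z) = G\!\left(1 - \tfrac{1-z}{n}\right)^{\!n}. \]

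Next I would establish a first-order expansion of $G$ near $z = 1$. Because $X$ takes values in the non-negative integers and $\mu = \EE X < \infty$, the PGF $G$ is differentiable at $1$ from the left with $G'(1^-) = \mu$ (this is the standard fact that $G'(z) \uparrow \EE X$ as $z \uparrow 1$, here finite), so $G(1-s) = 1 - \mu s + o(s)$ as $s \downarrow 0$. Substituting $s = (1-z)/n$ for fixed $z \in [0,1]$ gives $G\big(1 - (1-z)/n\big) = 1 - \mu(1-z)/n + o(1/n)$, and raising this to the $n$-th power yields
\[ G_n(z) \longrightarrow e^{-\mu(1-z)} = e^{\mu(z-1)} \qquad (n \to \infty), \]
using the elementary limit $(1 + a_n/n)^n \to e^a$ whenever $a_n \to a$. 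The right-hand side is exactly the PGF of the Poisson distribution with rate $\mu$.

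Finally, I would invoke the continuity theorem for PGFs: for random variables on the non-negative integers, pointwise convergence of PGFs on $[0,1]$ (indeed on any subinterval, or even pointwise on $[0,1)$) is equivalent to convergence in distribution. This is the analogue of L\'evy's continuity theorem, and it upgrades the convergence $G_n(z) \to e^{\mu(z-1)}$ to the claimed convergence in distribution.

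I expect the only delicate point to be the expansion step: one must check that finiteness of $\mu$ genuinely delivers $G(1-s) = 1 - \mu s + o(s)$, i.e.\ that the left derivative of $G$ at $1$ equals $\mu$, and that nothing goes wrong when $1-z$ is replaced by the shrinking quantity $(1-z)/n$. Once that is in hand, the remaining steps are routine and manifestly parallel to the textbook characteristic-function proof of Theorem~\ref{largenum}, which is the point the paper wishes to emphasise.
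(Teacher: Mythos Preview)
Your proof is correct and is essentially the paper's own argument: the paper works with the ``alternate'' PGF $A_X(u)=\EE(1-u)^X=G(1-u)$, which is just your $G$ under the substitution $u=1-z$, turning your thinning formula $G(1-p+pz)$ into the cleaner $A_X(pu)$ and your limit $e^{-\mu(1-z)}$ into $e^{-\mu u}$. The expansion, the $(1+a_n/n)^n\to e^a$ step, and the appeal to the continuity theorem are all identical.
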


In this note, we show that there exists a version of this for point processes too. Consider a point process $\xi$. First, take a superposition of $n$ IID copies of $\xi$. We think of a point process as a random counting measure, so the superposition is a sum of $n$ IID copies of the random measure. Second, thin that superposition by a factor of $1/n$. To get the thinning $p \circ \xi$ of a point process $\xi$, we independently keep each point of $\xi$ with probability $p$ and delete it with probability $1 - p$. 
Then, as $n \to \infty$, we get a Poisson process whose intensity measure is that of the original point process. This result is the natural equivalent for point processes of the laws of large and thin numbers. 
Following Harremoës et al, we propose the name `the law of thin processes'.

We largely use notation and terminology from the book \cite{devere}.

\begin{theorem}[Law of thin processes]\label{thinproc}
Let $\xi$ be a point process on a complete separable metric space $\mathcal X$ with intensity measure $\mu$, where $\mu(A) = \operatorname{\mathbb E}\xi(A)$. Let $\xi_1, \xi_2, \dots$ be IID copies of $\xi$. Then the thinned superposition
\[ \frac{1}{n}\circ (\xi_1 + \xi_2 + \cdots + \xi_n) \]
tends weakly to a Poisson process on $\mathcal X$ with intensity measure $\mu$ as $n \to \infty$.
\end{theorem}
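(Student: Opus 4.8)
The plan is to use Laplace functionals and follow the characteristic-function proof of the law of large numbers as closely as possible. For a point process $\eta$ on $\mathcal X$ and a bounded continuous function $f$ of bounded support, write $L_\eta(f) = \EE\bigl[\exp(-\int f\,d\eta)\bigr]$ for the Laplace functional. Three standard facts are needed: a sequence of point processes converges weakly to a point process exactly when their Laplace functionals converge pointwise on all such $f$ (the continuity theorem for Laplace functionals, \cite{devere}); a Poisson process with intensity $\mu$ has Laplace functional $\exp\bigl(-\int(1-e^{-f(x)})\,\mu(dx)\bigr)$; and the Laplace functional of a superposition of independent point processes is the product of their Laplace functionals. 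So it suffices to fix such an $f$, set $h = 1-e^{-f}$ (so $0 \le h < 1$ and $h$ is supported in some bounded set $K$, with $\int h\,d\mu \le \mu(K) < \infty$ as $\mu$ is boundedly finite), and show that the Laplace functional of $\frac1n\circ(\xi_1+\cdots+\xi_n)$ at $f$ tends to $\exp(-\int h\,d\mu)$.

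The first step is to compute that functional exactly. Conditioning on the point configuration and using that thinning keeps each point independently with probability $p$, one gets $\EE\bigl[\exp(-\int f\,d(p\circ\zeta)) \mid \zeta\bigr] = \prod_{x\in\zeta}(1-ph(x))$ for any point process $\zeta$; applying this to $\zeta = \xi_1+\cdots+\xi_n$ with $p = 1/n$ and then using independence of the $\xi_i$ gives
\[ L_{\frac1n\circ(\xi_1+\cdots+\xi_n)}(f) = \biggl(\EE\prod_{x\in\xi}\Bigl(1-\tfrac1n h(x)\Bigr)\biggr)^{\!n} =: Y_n^{\,n}. \]
This mirrors the identity $\phi_{(X_1+\cdots+X_n)/n}(t) = \phi_X(t/n)^n$ used to prove Theorem~\ref{largenum}.

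It remains to show $Y_n^n \to \exp(-\int h\,d\mu)$. Since $Y_n \to 1$, this is equivalent to $n(1-Y_n)\to\int h\,d\mu$. Because $h$ vanishes off $K$, the product over $x\in\xi$ is really a finite product over the almost surely finitely many points of $\xi$ in $K$; and for finitely many numbers $a_i\in[0,1]$ the elementary inequalities $0 \le 1-\prod_i(1-a_i) \le \sum_i a_i$ give, with $a_i = \frac1n h(x_i)$, the pointwise bound $0 \le n\bigl(1-\prod_{x\in\xi}(1-\tfrac1n h(x))\bigr) \le \int h\,d\xi$. As $n\to\infty$ the left-hand side tends pointwise to $\int h\,d\xi$, by a first-order expansion of the finite product. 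Since $\EE\int h\,d\xi = \int h\,d\mu < \infty$ provides an integrable dominating function, dominated convergence yields $n(1-Y_n)\to\int h\,d\mu$, and hence $Y_n^n = \exp\bigl(n\log(1-(1-Y_n))\bigr) \to \exp(-\int h\,d\mu)$, completing the proof.

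The bookkeeping for thinning and superposition, and the appeal to the continuity theorem, are routine. The one place calling for a little care — the analogue of bounding the remainder in the Taylor expansion of $\phi_X$ about $0$ — is the convergence $n(1-Y_n)\to\int h\,d\mu$; the point of the two-sided bound above is that it lets dominated convergence run using nothing more than the existence of the (boundedly finite) intensity measure, with no further moment assumptions on $\xi$.
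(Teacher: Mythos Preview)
Your proof is correct and follows essentially the same route as the paper's: both reduce to showing $\bigl(A_\xi(\tfrac1n h)\bigr)^n \to \exp\bigl(-\int h\,d\mu\bigr)$, where $A_\xi$ is the paper's alternate probability generating functional --- your $Y_n$ is exactly $A_\xi(\tfrac1n h)$ once you substitute $h = 1-e^{-f}$, so the Laplace-functional packaging is only cosmetic. The one substantive difference is that where the paper cites the first-order expansion $A_\xi(pu) = 1 - p\int u\,d\mu + o(p)$ from \cite[Proposition~9.5.VI]{devere}, you supply a direct dominated-convergence argument for $n(1-Y_n)\to\int h\,d\mu$ using only the Weierstrass product inequality and finiteness of $\int h\,d\mu$, which makes your version self-contained and transparently shows that no moment condition beyond the intensity measure is needed.
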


Although I have not managed to find the law of thin processes exactly as written above  in the literature, there are many results on the same lines. The limit theorem in \cite{serforzo} for more general thinning procedures is very similar, and the law of thin processes is essentially a special case of that result. There are two results in \cite{devere} for point processes in $\mathbb R$ that replace one of the two operations (superposition or thinning) with dilation or contraction of space:
\begin{itemize}
    \item If one first takes the superposition of $n$ IID copies of a real-valued point process, and, second, dilates space by a factor of $n$ (instead of thinning with probability $1/n$), then the limit is a homogenous Poisson process. This further requires the point process to be stationary. \cite[Proposition 11.2.VI]{devere}.
    \item If one takes a a real-valued point process and, first, contracts space by a factor of $1/n$ (instead of taking the superposition of $n$ IID copies), and, second, thins that with probability $1/n$, then the limit is a Poisson process. This further requires an extra condition which is a sort of weak stationarity condition. \cite[Proposition 11.3.I]{devere}
\end{itemize}
Moreover, the law of thin processes can presumably be shown as a corollary of much more general (and complicated) Poisson limit theorems, such as \cite[Propositions 11.2.V or 11.3.III]{devere}.
However, our emphasis in this note is not on novelty of the result itself, nor on presenting Poisson convergence in greatest generality, but rather on giving a simple proof of Theorem \ref{thinproc} that exactly reflects standard proofs for the laws of large and thin numbers.

\section{Proofs}

\subsection{Laws of large and thin numbers}

We start by sketching the proofs of the laws of large and thin numbers, because their structures -- which are extremely similar -- will show us the route to take to prove the law of thin processes and will highlight the similarities between the three results.

The proof of the law of large numbers will use the Laplace transform $L_X(u) = \operatorname{\mathbb E}\mathrm{e}^{-uX}$ for $u$ a real number in some neighbourhood $U$ of $0$. (One could just as well use the moment generating function $M_X(u) = L_X(-u) = \operatorname{\mathbb E}\mathrm{e}^{uX}$, but we keep the minus sign here for consistency with later proofs. For the purpose of this proof we'll assume that $L_X(u)$ does exist within such a $U$, but if convergence is an issue, one can instead use the characteristic function $\Phi_X(u) = L_X(-\mathrm{i}u) = \operatorname{\mathbb E}\mathrm{e}^{\mathrm{i}uX}$, which exists for all $u \in \mathbb R$.)

The Laplace transform has the following three crucial properties:
\begin{enumerate}
\item Behaviour under independent sums: If $X$ and $Y$ are independent, then $L_{X+Y}(u) = L_X(u)\,L_Y(u)$ for all $u \in U$.
\item Behaviour under scaling: $L_{aX}(u) = L_X(au)$ for all $a \in \mathbb R$ and $u \in U$.
\item Behaviour with limits: If $L_{X_n}(u) \to L_X(u)$ for all $u \in U$ as $n \to \infty$, then $X_n \to X$ in distribution.
\end{enumerate}


\begin{proof}[Proof of Theorem \ref{largenum}]
The Laplace transform of $X$ is
\[ L_X(u) = \operatorname{\mathbb E}\mathrm{e}^{-uX} = \operatorname{\mathbb E} \sum_{j=0}^\infty \frac{(-uX)^j}{j!} = \sum_{j=0}^\infty \frac{(-1)^j}{j!} \,(\mathbb EX^j)\,u^j = 1 - \mu u + o(u). \]
Writing
\[ Y_n = \frac{1}{n}(X_1 + X_2 + \cdots + X_n) \]
for the scaled sum, we have, using facts 1 and 2 above,
\[ L_{Y_n}(u) = L_X \Big( \frac1n\,u\Big)^n = \bigg(1 - \frac{\mu u}{n} + o\Big(\frac1n\Big)\bigg)^n \to \mathrm{e}^{-\mu u} \]
as $n \to \infty$. But this is the Laplace transform of a point mass at $\mu$, so by fact~3, $Y_n$ tends to that point mass as $n \to \infty$.
\end{proof}

The proof of the law of thin numbers will use the alternate probability generating function $A_X(u) = \operatorname{\mathbb E}(1-u)^X$ for $u \in [0,2]$. The alternate probability generating function has the following three crucial properties, equivalent to the three properties of the Laplace transform:
\begin{enumerate}
\item Behaviour under independent sums: If $X$ and $Y$ are independent, then $A_{X+Y}(u) = A_X(u)\,A_Y(u)$ for all $u \in [0,2]$.
\item Behaviour under thinning: $A_{p\circ X}(u) = A_X(pu)$ for all $p \in [0,1]$ and $u \in [0,2]$.
\item Behaviour with limits: If $A_{X_n}(u) \to A_X(u)$ for all $u \in [0,2]$ as $n \to \infty$, then $X_n \to X$ in distribution.
\end{enumerate}
Fact 2 is why we prefer the alternate probability generating function to the more common probability generating function $G_X(u) = \operatorname{\mathbb E}u^X = A_X(1 - u)$, since that has the more awkward expression $G_{p \circ X}(u) = G_X(1 - p + pu)$.

The following proof of the law of thin numbers is essentially that given in \cite[Proposition 2.1]{JK}.

\begin{proof}[Proof of Theorem \ref{thinnum}]
The alternate probability generating function of $X$ is
\[ A_X(u) = \operatorname{\mathbb E}(1-u)^X = \operatorname{\mathbb E} \sum_{j=0}^\infty \binom{X}{j}(-u)^j = \sum_{j=0}^\infty \frac{(-1)^j}{j!} \,\mathbb E(X)_j\,u^j = 1 - \mu u + o(u) ,\]
where
\[ \mathbb E(X)_j = \mathbb EX(X-1)\cdots(X-j+1) \]
are the factorial moments. Writing
\[ Y_n = \frac{1}{n}\circ(X_1 + X_2 + \cdots + X_n) \]
for the thinned sum, we have, using facts 1 and 2 above,
\[ A_{Y_n}(u) = A_X \Big( \frac1n\,u\Big)^n = \bigg(1 - \frac{\mu u}{n} + o\Big(\frac1n\Big)\bigg)^n \to \mathrm{e}^{-\mu u} \]
as $n \to \infty$. But this is the alternate probability generating function of a Poisson distribution with rate $\mu$, so by fact 3, $Y_n$ tends to that Poisson distribution as $n \to \infty$.
\end{proof}

\subsection{Law of thin processes}

To prove the law of thin processes (Theorem \ref{thinnum}), we use what we shall call the \emph{alternate probability generating functional}
\[ A_\xi(u) = \operatorname{\EE} \exp \left( \int \log\big(1 - u(x)\big)\,\xi(\mathrm dx)\right)  \]
for $u \in \mathcal U$, where $\mathcal U = \mathcal U(\mathcal X)$ is the set of functions $u\colon\mathcal X \to [0,1]$ that are zero outside a bounded set. Because the point process is almost surely finite on the set where $u$ does not vanish, we can write
\[ A_\xi(u) = \mathbb E \prod_i \big(1 - u(x_i)\big) , \]
where the product is taken over the points $x_i$ of $\xi$.

The alternate probability generating functional again has three crucial properties, which mirror those we saw for the Laplace transform and the alternative probability generating function in the two earlier proofs:
\begin{enumerate}
\item Behaviour under independent superpositions: If $\xi$ and $\eta$ are independent, then $A_{\xi+\eta}(u) = A_\xi(u)\,A_\eta(u)$ for all $u \in \mathcal U$. \cite[Proposition 9.4.IX]{devere}
\item Behaviour under thinning: $A_{p \circ \xi}(u) = A_\xi(pu)$ for all $p \in [0,1]$ and $u \in \mathcal U$. \cite[equation (11.3.2)]{devere}
\item Behaviour with limits: If $A_{\xi_n}(u) \to A_\xi(u)$ for all $u \in \mathcal U$ as $n \to \infty$, then $\xi_n \to \xi$ weakly. \cite[Proposition 11.1.VIII]{devere}
\end{enumerate}
Again, fact 2 is why we prefer the alternate probability generating functional to the more common probability generating functional 
\[ G_\xi(u) = \operatorname{\EE} \exp \left( \int \log u(x)\,\xi(\mathrm dx)\right) = \mathbb E \prod_i u(x_i) = A_\xi(1-u), \]
which has the more awkward expression $G_{p \circ \xi}(u) = G_\xi(1 - p + pu)$, or the Laplace functional
\[ L_\xi(u) = \operatorname{\EE} \exp \left( -\int u(x)\,\xi(\mathrm dx)\right) = \mathbb E \prod_i \mathrm{e}^{-u(x_i)} = A_\xi(1-\mathrm{e}^{-u}), \]
for which $L_{p \circ \xi}(u) = L_\xi(-\log(1 - p + p\mathrm{e}^{-u}))$.

The final preparatory step we need is a result that writes the alternate probability generating functional in terms of the factorial moment measures $m_{(j)}$, in the same way as we earlier wrote the Laplace transform in term of the moments $\mathbb EX^j$ and the alternate probability generating function in terms of the factorial moments $\mathbb E(X)_j$. Informally, we would expect 
\[
A_\xi(u) = 1 + \sum_{j=1}^\infty \frac{(-1)^j}{j!} \int_{\mathcal X^{j}} u(x_1)\cdots u(x_j)\,m_{(j)}(\mathrm dx_1 \times \cdots \times \mathrm dx_j) .
\]
However, there are convergence issues that mean the summands on the right-hand side may not exist or the sum may not converge (see \cite[Lemma 4.11]{penrose}). What we do have if the following: if $\xi$ is such that the $k$th factorial moment measure $m_{(k)}$ exists, then
\[
A_\xi(pu) = 1 + \sum_{j=1}^k \frac{(-p)^j}{j!} \int_{\mathcal X^{j}} u(x_1)\cdots u(x_j)\,m_{(j)}(\mathrm dx_1 \times \cdots \times \mathrm dx_j) + o(p^k)
\]
for $u \in \mathcal U$ as $p \to 0$ \cite[Proposition 9.5.VI]{devere}. The exact definition of the factorial moment measures is not important here, as we will only need the $k = 1$ case,
\begin{equation} \label{want}
A_\xi(pu) = 1 - p\int_\mathcal X u(x)\,\mu(\mathrm{d}x) + o(p) , 
\end{equation}
where the first factorial moment measure $m_{(1)} = \mu$ is the simply the intensity measure $\mu(A) = \operatorname{\mathbb E}\xi(A)$ of $\xi$.

We are now ready to prove the law of thin processes by following exactly the steps of the two earlier proofs.

\begin{proof}[Proof of Theorem \ref{thinproc}]
Writing
\[ \eta_n = \frac{1}{n}\circ(\xi_1 + \xi_2 + \cdots + \xi_n) \]
for the thinned superposition, we have, using facts 1 and 2 above,
\[ A_{\eta_n}(u) = A_\xi \Big( \frac1n\,u\Big)^n = \bigg(1 - \frac{1}{n} \int u(x)\,\mu(\mathrm{d}x)+ o\Big(\frac1n\Big)\bigg)^n  \]
as $n \to \infty$, where the second equality is the result \eqref{want}. Then
\[ A_{\eta_n}(u) = \bigg(1 - \frac{1}{n} \int u(x)\,\mu(\mathrm{d}x)+ o\Big(\frac1n\Big)\bigg)^n \to \exp \bigg( -\int u(x)\,\mu(\mathrm{d}x) \bigg) . \]
But this is the alternate probability generating functional of a Poisson process with intensity measure $\mu$ \cite[Example 9.4(c)]{devere}, so by fact 3, $\eta_n$ tends to that Poisson process as $n \to \infty$.
\end{proof}


\end{document}